\newtheorem{theorem}{Theorem}[section]
\newtheorem{lemma}[theorem]{Lemma}
\newtheorem{remark}[theorem]{Remark}
\numberwithin{equation}{section}
\begin{document}
\title{A Gap Theorem on complete shrinking gradient Ricci solitons}

\author{Shijin Zhang}
\address{Shijin Zhang, School of Mathematics and systems science, Beihang University, Beijing,  100871, P.R.China, shijinzhang@buaa.edu.cn}

 \subjclass[2000] {Primary: 53C20}
\keywords {shrinking gradient Ricci solitons, sectional curvature, gap theorem}

\maketitle

\begin{abstract}
In this short note, using G\"unther's volume comparison theorem and Yokota's gap theorem on complete shrinking gradient Ricci solitons, we prove that for any complete shrinking gradient Ricci soliton $(M^{n},g,f)$ with sectional curvature $K(g)<A$ and ${\rm Vol}_{f}(M)\geq v$ for some uniform constant $A,v$,  there exists a small uniform constant $\epsilon_{n,A,v}>0$ depends only on $n, A$ and $v$, if the scalar curvature $R\leq \epsilon_{n,A,v}$,  then $(M,g,f)$ is isometric to the Gaussian soliton $(\mathbb{R}^{n}, g_{E}, \frac{|x|^{2}}{4})$.
\end{abstract}

\vskip6mm
\section*{Introduction}
Gradient Ricci solitons play an important role in Hamilton's Ricci flow as they correspond to self-similar solutions, and often arise as singularity models of the Ricci flow.  The study of Ricci solitons has also become increasingly important in the study in metric measure theory.

A complete Riemannian manifold $(M,g)$ is called a gradient Ricci soliton if there exists a smooth function $f$ on $M$ such that
\begin{equation*}
{\rm Ric}+\nabla\nabla f=\lambda g
\end{equation*}
for some constant $\lambda$. It is denoted by $(M,g,f)$. For $\lambda<0$ the Ricci soliton is expanding, for $\lambda=0$ it is steady and for $\lambda >0$ is shrinking. The function $f$ is called a potential function of the gradient Ricci soliton. After rescaling the metric $g$ we may assume that $\lambda\in\{-\frac{1}{2},0,\frac{1}{2}\}$. In this short note, we only consider the shrinking case. If $M=\mathbb{R}^{n}, g=g_{E}$ (Euclidean metric) , $f=\frac{|x|^{2}}{4}$, $(M,g,f)$ is a shrinking Ricci soliton, it is called a Gaussian soliton.

In this note, $R$ denotes the scalar curvature of $(M,g,f)$, we always normalize the potential function $f$ by adding a constant so that
\begin{equation}
R+|\nabla f|^{2}=f.
\end{equation}
With this normalization of $f$, the normalized $f$-volume ${\rm Vol}_f(M)$, which is called the Gaussian density in \cite{CHI},  is defined by 
\begin{equation}
{\rm Vol}_{f}(M)=(4\pi)^{-\frac{n}{2}}\int_{M}e^{-f}dV_{g},
\end{equation}
and Perelman's invariant $\mu_0$ is defined by \cite{Carrillo-Ni, Perelman}:
\begin{equation}
\mu_0=-\log{\rm Vol}_{f}(M).
\end{equation}

For complete shrinking gradient Ricci soliton $(M,g,f)$, if $0\leq {\rm Ric}(g)<\frac{1}{2}g$, Naber \cite{Naber} proved that $(M,g,f)$ is isometric to the Gaussian soliton. Naber used the equation of the scalar curvature
\begin{equation}
\Delta_{f}R=R-2|{\rm Ric}|^{2}=\lambda_{i}(1-2\lambda_{i})\geq 0,
\end{equation}
where $\Delta_{f}R=\Delta R-<\nabla f,\nabla R>$, $\lambda_{i}$ are the eigenvalues of Ricci tensor. Then the scalar curvature must be constant (this result also can be obtained from the main result in the author and Ge's paper \cite{Ge-Zhang}), implies that the scalar curvature and the Ricci curvature must be zero, hence $(M,g, f)$ is the Gaussian soliton.

Without the assumption of nonnegative Ricci curvature, Munteanu and Wang \cite{Munteanu-WangM} proved that if $|{\rm Ric}|\leq \frac{1}{100n}$, then $(M,g,f)$ is isometric to the Gaussian soliton. If the Ricci curvature is bounded, they first proved that the Riemann curvature tensor grows at most polynomially in the distance function, then using the equations for the scalar curvature, Ricci tensor and Riemann curvature tensor on the complete shrinking gradient Ricci solitons, they proved that for any $p\geq 3$,
\begin{equation}
\int_{M}(f-\frac{n}{2}+p(1-50pK))|{\rm Rm}|^{p}e^{-f}dV_{g}\leq 0,
\end{equation}
where $K=\sup_{x\in M}|{\rm Ric}|$. Then they take $p=n$ and $K=\frac{1}{100n}$, to get ${\rm Rm}=0$, hence $(M,g,f)$ is isometric to the Gaussion soliton.

In this short note, using G\"unther's volume comparison theorem and Yotoka's gap theorem on the complete shrinking gradient Ricci solitons, we prove the following gap theorem.
\begin{theorem}\label{MainThm}
Let $A,v$ be positive numbers. Let $(M^{n},g,f)$ be a complete shrinking gradient Ricci soliton with sectional curvature $K(g)<A$ and ${\rm Vol}_{f}(M)\geq v$. There exists $\epsilon_{n, A,v}>0$ depends only on $n, A$ and $v$, if $R(g)\leq \epsilon_{n, A,v}$, then $(M,g,f)$ is isometric to  the Gaussian soliton $(\mathbb{R}^{n}, g_{E}, \frac{|x|^{2}}{4})$.
\end{theorem}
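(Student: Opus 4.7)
The plan is to reduce the theorem to Yokota's gap theorem by showing that the hypotheses force the Gaussian density to exceed $1-\epsilon_{n}$, where $\epsilon_{n}>0$ is Yokota's gap constant. Fix $p\in M$ a minimum point of $f$ (it exists because $f$ grows quadratically, by Cao--Zhou). Then $\nabla f(p)=0$ and so $f(p)=R(p)\leq \epsilon_{n,A,v}$; moreover $|\nabla f|^{2}=f-R\leq f$ implies $\sqrt{f}$ is $\tfrac{1}{2}$-Lipschitz on $M$, giving the refined upper bound
$$f(x)\leq \Bigl(\sqrt{f(p)}+\tfrac{1}{2}d(x,p)\Bigr)^{2},$$
which is within $O(\sqrt{\epsilon_{n,A,v}})$ of the Gaussian potential $d(x,p)^{2}/4$.

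Because $K(g)<A$, the conjugate radius at $p$ is at least $r_{A}=\pi/\sqrt{A}$ (or infinite when $A\leq 0$), so G\"unther's theorem provides a lower bound on the volume element in normal polar coordinates at $p$ by that of the constant-curvature-$A$ model space. Combining this with the pointwise lower bound for $e^{-f}$ coming from the previous display and integrating yields an explicit quantitative estimate $\mathrm{Vol}_{f}(M)\geq h(R,n,A)$. The main obstacle is that $h(0,n,A)<1$ for $A>0$, since the metric ball at $p$ inside the conjugate radius may be strictly smaller than the full exponential image (there is no a priori injectivity-radius bound), so this naive direct estimate is not by itself sharp enough to reach $1-\epsilon_{n}$.

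To close the remaining gap I would argue by contradiction. If no $\epsilon_{n,A,v}$ works, take a sequence of non-Gaussian shrinkers $(M_{k},g_{k},f_{k})$ satisfying the hypotheses with $\sup R(g_{k})\to 0$. The upper bound $K(g_{k})<A$ together with $R(g_{k})\geq 0$ (Chen) gives $|\mathrm{Ric}(g_{k})|\leq C(n,A)$, and so by the shrinker equation $\nabla\nabla f_{k}$ is uniformly bounded. The assumption $\mathrm{Vol}_{f_{k}}(M_{k})\geq v$ combined with Cao--Zhou's quadratic control of $f_{k}$ around its minimum $p_{k}$ gives volume non-collapse at $p_{k}$; Cheeger--Gromov compactness for shrinkers (as in Haslhofer--M\"uller) then produces a pointed $C^{\infty}$ subsequential limit $(M_{\infty},g_{\infty},f_{\infty})$ that is still a complete shrinker. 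In the limit $R_{\infty}\equiv 0$, so the identity $\Delta_{f_{\infty}}R_{\infty}=R_{\infty}-2|\mathrm{Ric}_{\infty}|^{2}$ forces $\mathrm{Ric}_{\infty}\equiv 0$; the shrinker equation then gives $\nabla\nabla f_{\infty}=\tfrac{1}{2}g_{\infty}$, identifying $(M_{\infty},g_{\infty},f_{\infty})$ as the Gaussian soliton with $\mathrm{Vol}_{f_{\infty}}=1$. Uniform tail control---G\"unther for the bulk of $\int e^{-f_{k}}dV_{g_{k}}$ and Cao--Zhou's lower bound on $f_{k}$ together with the polynomial volume growth of shrinkers for the tail---passes $\mathrm{Vol}_{f_{k}}(M_{k})\to \mathrm{Vol}_{f_{\infty}}(M_{\infty})=1$ to the limit. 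For $k$ large this exceeds $1-\epsilon_{n}$, and Yokota's gap theorem forces $M_{k}$ to be Gaussian, contradicting the choice of the sequence.
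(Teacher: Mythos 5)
Your reduction to Yokota's gap theorem and your diagnosis of the obstruction to a naive use of G\"unther's theorem (no a priori injectivity radius control) match the paper, but the way you close the argument is genuinely different. The paper stays quantitative: it gets a uniform lower bound on ${\rm Vol}(B_p(1))$ from Carrillo--Ni's logarithmic Sobolev inequality, converts it into a uniform injectivity radius bound via Cheeger--Gromov--Taylor, applies G\"unther only on a ball of radius $r_0\sim\epsilon^{1/4}$ (where the curvature-$A$ comparison volume is already $(1-o(1))\,\omega_n r_0^n$), and then uses the soliton identity $\Delta f+R=\tfrac{n}{2}$ to prove that $V(r)/(r^2+4c_p)^{n/2-\delta}$ is nondecreasing on sublevel sets of $f$; this monotonicity is the key lemma that propagates the almost-Euclidean volume of the tiny ball to all scales and yields ${\rm Vol}_f(M)\ge 1-\epsilon'_n$ with an in-principle explicit $\epsilon_{n,A,v}$. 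You instead run a compactness-and-contradiction argument: non-collapse at the minimum point of $f$, pointed smooth subconvergence to the Gaussian soliton, convergence of the weighted volumes to $1$, then Yokota. This bypasses both the injectivity radius analysis and the volume monotonicity lemma, at the cost of a non-effective constant and heavier machinery. Three details need care in a write-up, though none is a real gap: (i) non-collapse at $p_k$ needs not only the quadratic lower bound on $f_k$ but also the uniform upper volume bound ${\rm Vol}(B_p(r))\le C(n)r^n$ of Cao--Zhou/Haslhofer--M\"uller (to show the weighted volume cannot live far from $p_k$) followed by Bishop--Gromov (to pass from a large ball to the unit ball); (ii) the hypotheses bound ${\rm Rm}$ but not its derivatives, so for $C^\infty$ rather than $C^{1,\alpha}$ convergence you should invoke Shi's estimates for the ancient self-similar flow generated by each shrinker; (iii) in your final sentence G\"unther plays no role in the convergence of the bulk term $\int_{B(r)}e^{-f_k}\,dV_{g_k}$ --- that follows from the local smooth convergence itself, and G\"unther's one-sided volume bound points the wrong way for that purpose.
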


We recall Yokota's gap theorem as following.
\begin{theorem}[Yokota, \cite{Yokota,YokotaA}]\label{Yokota}
There exists a constant $\epsilon'_{n}>0$ which depends only on $n\geq 2$ and satisfies the following: Any complete shrinking gradient Ricci soliton $(M^{n},g,f)$ with
$${\rm Vol}_{f}(M)\geq 1-\epsilon'_{n},$$
then $(M,g,f)$ is, up to scaling, the Gaussian soliton $(\mathbb{R}^{n}, g_{E}, \frac{|x|^{2}}{4})$.
\end{theorem}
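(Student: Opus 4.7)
The plan is to reduce the statement to Yokota's gap theorem (Theorem~\ref{Yokota}) via a contradiction and pointed Cheeger--Gromov compactness argument. Suppose the conclusion failed for some fixed $n, A, v$; then there is a sequence $(M_k^n, g_k, f_k)$ of complete shrinking gradient Ricci solitons, none isometric to the Gaussian soliton, with $K(g_k) < A$, ${\rm Vol}_{f_k}(M_k) \geq v$, and $R_{g_k} \leq 1/k$. I would extract a pointed smooth limit based at minimizers $p_k$ of $f_k$, identify it with the Gaussian soliton, and deduce ${\rm Vol}_{f_k}(M_k) \to 1$ so that Theorem~\ref{Yokota} yields the desired contradiction.

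Chen's nonnegativity $R \geq 0$ and the normalization $R + |\nabla f|^{2} = f$ force $f_k \geq 0$ with minimum attained at some $p_k$ where $f_k(p_k) = R_{g_k}(p_k) \leq 1/k$. The gradient bound $|\nabla(2\sqrt{f_k})|^{2} = 1 - R/f \leq 1$ then gives $f_k(x) \leq (k^{-1/2} + d(x,p_k)/2)^{2}$. Next, the sectional upper bound combined with $0 \leq R \leq 1/k$ algebraically produces a two-sided sectional curvature bound $|K(g_k)| \leq C_n A$: each Ricci eigenvalue lies in $[-(n-1)^{2} A, (n-1) A]$ (using $\lambda_i = \sum_{j\neq i} K_{ij}$ together with $K_{ij}<A$ and $R\geq 0$), and then $K_{ij} = \lambda_i - \sum_{\ell \neq i, j} K_{i\ell}$ is bounded below by $-((n-1)^{2} + (n-2)) A$. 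Through the shrinker identity $\nabla R = 2{\rm Ric}(\nabla f)$ and Shi-type derivative estimates for the associated self-similar Ricci flow, this pointwise curvature bound propagates to uniform $C^{\infty}$ bounds on ${\rm Rm}(g_k)$ and $f_k$ on any fixed geodesic ball around $p_k$.

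G\"unther's comparison is invoked here to guarantee non-collapsing at $p_k$: since $K(g_k) < A$, the conjugate radius at $p_k$ is at least $\pi/\sqrt{A}$, hence ${\rm Vol}_{g_k}(B_{r_0}(p_k)) \geq V_A(r_0) > 0$ for any fixed $r_0 \in (0, \pi/\sqrt{A})$. Together with the two-sided sectional bound, Cheeger's lemma yields a uniform positive lower bound on ${\rm inj}(p_k)$. A subsequence thus converges in the pointed $C^{\infty}$ Cheeger--Gromov sense to a complete limit $(M_{\infty}, g_{\infty}, f_{\infty}, p_{\infty})$ satisfying the shrinker equation, with $R_{g_{\infty}} \equiv 0$ and $f_{\infty}(p_{\infty}) = 0$. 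The identity $\Delta_{f_{\infty}} R_{\infty} = R_{\infty} - 2|{\rm Ric}_{\infty}|^{2}$ forces ${\rm Ric}_{\infty} \equiv 0$, hence ${\rm Hess}(f_{\infty}) = g_{\infty}/2$, and standard rigidity then identifies $(M_\infty, g_\infty, f_\infty)$ with the Gaussian soliton; in particular ${\rm Vol}_{f_{\infty}}(M_{\infty}) = 1$.

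Smooth convergence on any fixed ball $B_R(p_{\infty})$ of the limit yields $(4\pi)^{-n/2} \int_{B_R(p_k)} e^{-f_k} dV_{g_k} \to (4\pi)^{-n/2} \int_{B_R(0)} e^{-|x|^{2}/4} dV_E$, and letting $R \to \infty$ this tends to $1$. Together with the universal upper bound ${\rm Vol}_f(M) \leq 1$ for any complete shrinker (a consequence of Perelman's entropy monotonicity, already built into Theorem~\ref{Yokota}), this gives $\liminf_k {\rm Vol}_{f_k}(M_k) \geq 1$ and hence ${\rm Vol}_{f_k}(M_k) \to 1$. For $k$ large enough, ${\rm Vol}_{f_k}(M_k) > 1 - \epsilon'_n$, so Theorem~\ref{Yokota} forces $(M_k, g_k, f_k)$ to be the Gaussian soliton, contradicting the construction of the sequence. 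The main technical obstacle is upgrading the pointwise curvature bound to uniform $C^{\infty}$ control of $g_k$ and $f_k$ on balls around $p_k$, which requires a careful iteration of the shrinker identities and Shi-type estimates; the remaining steps are essentially a chain of standard rigidity and compactness facts.
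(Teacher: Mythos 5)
The statement you were asked to prove is Theorem \ref{Yokota} itself --- Yokota's gap theorem, whose only hypothesis is ${\rm Vol}_{f}(M)\geq 1-\epsilon'_{n}$, with no curvature or non-collapsing assumptions whatsoever. Your argument instead assumes a sectional curvature bound $K(g_k)<A$, a lower bound ${\rm Vol}_{f_k}(M_k)\geq v$, and small scalar curvature $R\leq 1/k$: these are the hypotheses of the paper's main result, Theorem \ref{MainThm}, not of the statement in question. Worse, your final step invokes Theorem \ref{Yokota} to reach the contradiction, so as a proof of Theorem \ref{Yokota} the argument is circular; and for a soliton satisfying only ${\rm Vol}_{f}(M)\geq 1-\epsilon'_{n}$ no curvature bounds are available, so the pointed Cheeger--Gromov compactness machinery you rely on cannot even be started. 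The paper itself does not prove this theorem: it is quoted from Yokota's papers, where it is established by entirely different means (Perelman's reduced volume and a gap theorem for ancient solutions to the Ricci flow, first under a lower Ricci bound and then with that hypothesis removed), and it is then used in this paper as a black box.

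What you have actually sketched is an alternative, compactness-based proof of Theorem \ref{MainThm}. Read that way it is a plausible strategy, but it would still need substantial work: the uniform injectivity radius bound (which the paper obtains from Carrillo--Ni's logarithmic Sobolev inequality plus Cheeger--Gromov--Taylor, precisely because a volume lower bound at the basepoint is not free), the smooth subconvergence of the potentials $f_k$, and above all the passage from smooth convergence on compact sets to convergence of the full weighted volumes ${\rm Vol}_{f_k}(M_k)\to 1$, which requires uniform control of $\int e^{-f_k}$ outside large balls; the paper's quantitative volume growth estimate (Lemma \ref{Lemma1.3}) plays exactly this role in its direct, non-compactness argument. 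None of this, however, addresses the statement you were assigned.
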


In \cite{Yokota}, using Perelman's reduced volume, Yokota proved a gap theorem for ancient solutions to the Ricci flow with Ricci curvature bounded below. As a corollary, he obtained the above gap theorem under the additional assumption that the Ricci curvature is bounded below. Later, in \cite{YokotaA}, he removed the assumption that the Ricci curvature is bounded below. As an application of the above theorem, Yokota gave a complete affirmative answer to the conjecture of Carrillo-Ni \cite{Carrillo-Ni}, the complete shrinking gradient Ricci soliton $(M,g,f)$ is the Gaussian soliton if and only if ${\rm Vol}_{f}(M)=1.$

Now we recall G\"unther's volume comparison theorem. Let $M_{H}$ be the simply connected space form of constant sectional curvature $H$. Let $V_{H}(r)$ denote the volume of a ball in $M_{H}$ with radius $r$. The G\"unther's volume comparison theorem is following. 

\begin{theorem}[G\"unther,\cite{Gunther}] \label{Gunther}
Let $(M,g)$ be a Riemannian manifold, $p\in M$, $i(p)$ denotes the injectivity radius at the point $p$. If the sectional curvature $K(g)\leq H$, then
\begin{equation}
{\rm Vol}(B_p(r))\geq V_{H}(r)
\end{equation}
for any $r<\min\{i(p),\frac{\pi}{H^{1/2}}\}$.
\end{theorem}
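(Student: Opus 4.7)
The plan is to show that the hypotheses force the Gaussian density $\mathrm{Vol}_f(M)$ into the range $[1-\epsilon'_n,\,1]$ covered by Yokota's gap theorem (Theorem~\ref{Yokota}), which then yields the conclusion that $(M,g,f)$ is, up to scaling, the Gaussian soliton; the normalization $\lambda=1/2$ pins down the scaling.

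The first ingredient is a pointwise bound on $f$. By Chen's result $R\geq 0$ on any complete shrinker, so combined with the normalization $R+|\nabla f|^{2}=f$ we get $f\geq 0$ and $|\nabla\sqrt{f}|\leq 1/2$ on $\{f>0\}$. By the Cao--Zhou properness of $f$ its minimum is attained at some point $p$, where $\nabla f(p)=0$ forces $f(p)=R(p)\leq \epsilon$. Integrating the Lipschitz bound along minimizing geodesics from $p$ yields
\[
f(x)\leq\Bigl(\sqrt{\epsilon}+\tfrac{1}{2}d(x,p)\Bigr)^{2}\qquad(x\in M),
\]
hence $e^{-f(x)}\geq\exp\!\bigl(-(\sqrt{\epsilon}+d(x,p)/2)^{2}\bigr)$ throughout $M$.

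Next I would feed this into Theorem~\ref{Gunther}. In normal coordinates at $p$, G\"unther's comparison gives a pointwise lower bound on the Jacobian of the exponential map by the Jacobian in the simply-connected model of constant curvature $A$; integrating the radial function $\exp(-(\sqrt{\epsilon}+r/2)^{2})$ over $B_p(r)$ for $r\leq\min\{i(p),\pi/\sqrt{A}\}$ gives
\[
\int_{B_p(r)} e^{-f}\,dV_{g}\;\geq\;\omega_{n-1}\int_{0}^{r} e^{-(\sqrt{\epsilon}+s/2)^{2}}\Bigl(\tfrac{\sin(\sqrt{A}\,s)}{\sqrt{A}}\Bigr)^{n-1}ds.
\]
A direct computation, together with the Gaussian identity $(4\pi)^{-n/2}\omega_{n-1}\int_{0}^{\infty}e^{-s^{2}/4}s^{n-1}\,ds=1$, shows that once $r$ is taken as large as $\pi/\sqrt{A}$ and $\epsilon$ is driven to $0$, the right-hand side divided by $(4\pi)^{n/2}$ can be made $\geq 1-\epsilon'_n$. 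Choosing $\epsilon_{n,A,v}$ small enough to realize this, Theorem~\ref{Yokota} delivers the conclusion.

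The main obstacle will be legitimately taking $r$ as large as $\pi/\sqrt{A}$ in the G\"unther step. The upper sectional bound $K<A$ controls only the conjugate radius, not the injectivity radius at $p$, and without a lower sectional curvature bound the classical Cheeger injectivity-radius lemma does not apply. This is where the hypothesis $\mathrm{Vol}_f(M)\geq v$ must do the work: on a shrinker the Bakry--\'Emery Ricci tensor $\mathrm{Ric}+\nabla^{2}f$ equals $g/2$, so the Gaussian density functions as a non-collapsing bound, and the plan is to combine it with $K<A$ to produce a lower bound $i(p)\geq i_{0}(n,A,v)>0$ at the minimum point, either by a weighted Cheeger-type argument or by a contradiction-compactness scheme that extracts a limit shrinker and uses Chen's formula $\Delta_{f}R=R-2|\mathrm{Ric}|^{2}$ together with Tashiro's rigidity to force it to be Gaussian. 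Once such $i_{0}$ is in hand the two preceding paragraphs close the proof.
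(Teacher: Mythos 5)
Your proposal does not address the statement at hand. The statement is G\"unther's volume comparison theorem itself --- a classical fact about an arbitrary Riemannian manifold with sectional curvature bounded above by $H$, asserting ${\rm Vol}(B_p(r))\geq V_{H}(r)$ for $r$ below both the injectivity radius at $p$ and $\pi/\sqrt{H}$. What you have written is instead a sketch of the paper's main result (Theorem \ref{MainThm}), and in the course of it you explicitly invoke Theorem \ref{Gunther} as a known tool (``Next I would feed this into Theorem~\ref{Gunther}''). Using the statement as an ingredient in the proof of a different theorem cannot serve as a proof of the statement; nothing in your text establishes the inequality ${\rm Vol}(B_p(r))\geq V_{H}(r)$, and the discussion of Gaussian densities, Yokota's gap theorem, and injectivity radius lower bounds is irrelevant to it. (For what it is worth, the paper does not prove this theorem either; it is quoted with a citation to G\"unther's original article.)

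If you did want to supply an argument, the standard one is short: for $r<i(p)$ write ${\rm Vol}(B_p(r))=\int_{S^{n-1}}\int_{0}^{r}J(s,\theta)\,ds\,d\theta$ in geodesic polar coordinates, where $J(s,\theta)$ is the Jacobian of $\exp_p$ along the unit-speed radial geodesic in direction $\theta$. The Rauch comparison theorem (equivalently, an index-form argument applied to Jacobi fields vanishing at $p$) shows that under $K(g)\leq H$ every such Jacobi field $Y$ with $Y(0)=0$, $|Y'(0)|=1$ satisfies $|Y(s)|\geq \sin(\sqrt{H}s)/\sqrt{H}$ for $s<\pi/\sqrt{H}$, i.e.\ up to the first conjugate distance of the model $M_{H}$; this yields the pointwise Jacobian bound $J(s,\theta)\geq \bigl(\sin(\sqrt{H}s)/\sqrt{H}\bigr)^{n-1}$, and integrating over $\theta$ and $s$ gives exactly ${\rm Vol}(B_p(r))\geq V_{H}(r)$. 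The restriction $r<\pi/\sqrt{H}$ is what keeps the model Jacobian positive, and the restriction $r<i(p)$ is what makes the polar-coordinate representation valid. None of this appears in your proposal.
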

Note that for $H>0$
$$V_{H}(r)=n\omega_{n}\int_{0}^{r}(\frac{\sin(H^{1/2}s)}{H^{1/2}})^{n-1}ds,$$
here $\omega_{n}$ is the volume of unit ball $B(1)$ in $\mathbb{R}^{n}$ with Euclidean metric $g_{E}$.

Under the assumption of ${\rm Vol}_{f}(M)\geq v$, we use Carrillo-Ni's logarithmic Sobolev inequality to get the lower bound of ${\rm Vol}(B_{p}(1))$, then get the uniform lower bound of the injectivity radius from Cheeger-Gromov-Taylor's theorem (Theorem \ref{Cheeger-Gromov-Taylor} below),  and use the G\"unther's volume comparison theorem,  we get a better lower bound of ${\rm Vol}(B_{p}(r_0))$ with small $r_0$. Thus we get a lower bound of the normalized $f$-volume with small scalar curvature.  Then the Theorem \ref{MainThm} follows from Yokota's gap theorem (Theorem \ref{Yokota}). 

In section 1, under the assumption that the scalar curvature $R\leq \delta<\frac{n}{2}$, we give the lower bound of the volume of $D(r)$, the set of $x\in M$ such that $2\sqrt{f(x)-f(p)}\leq r$. In section 2, we obtain the uniform lower bound of the injectivity radius. In section 3, we give the proof of the Theorem \ref{MainThm}.

\section{Lower bound of the volume of $D(r)$}
In this section, recall some properties of the complete shrinking gradient Ricci solitons and give a estimate about the lower bound of the volume of $D(r)$ with scalar curvature $R\leq \delta<\frac{n}{2}$.

\begin{lemma}
\quad
\begin{enumerate}
\item $R+\Delta f=\frac{n}{2}$;
\item $R+|\nabla f|^{2}=f$, after normalizing the function $f$ by a constant;
\item For any fixed point $p\in M^{n}$, there exist two positive constants $c_{1}$ and $c_{2}$ so that, for any $x\in M^{n}$, we have
\begin{equation}
\frac{1}{4}(d(x)-c_{1})^{2}\leq f(x)\leq \frac{1}{4}(d(x)+c_{2})^{2},
\end{equation}
where $d(x)$ is the distance function from $x$ to $p$.
\end{enumerate}
\end{lemma}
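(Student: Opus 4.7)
The plan is to verify the three assertions in order, which are the standard structural identities of Hamilton and the distance-type estimate of Cao--Zhou for complete shrinking gradient Ricci solitons. Part (1) and part (2) are short algebraic manipulations from the soliton equation, while part (3) requires a more geometric argument along minimizing geodesics emanating from $p$.

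\textbf{Part (1).} I would simply take the trace of the defining equation $\mathrm{Ric} + \nabla\nabla f = \tfrac{1}{2}g$ (since $\lambda = \tfrac{1}{2}$ in the shrinking case after rescaling), which immediately gives $R + \Delta f = \tfrac{n}{2}$.

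\textbf{Part (2).} I would differentiate the soliton equation. Taking the divergence of $R_{ij} + \nabla_i\nabla_j f = \tfrac{1}{2}g_{ij}$ and applying the contracted second Bianchi identity $\nabla^{i}R_{ij} = \tfrac{1}{2}\nabla_j R$ together with the commutator formula $\nabla^{i}\nabla_i\nabla_j f = \nabla_j(\Delta f) + R_{jk}\nabla^{k} f$, one obtains
\begin{equation*}
\tfrac{1}{2}\nabla_j R + \nabla_j(\Delta f) + R_{jk}\nabla^{k} f = 0.
\end{equation*}
Using part (1) to substitute $\Delta f = \tfrac{n}{2} - R$ and the soliton equation to rewrite $R_{jk}\nabla^{k}f = \tfrac{1}{2}\nabla_j f - \nabla_j\nabla_k f \nabla^{k} f = \tfrac{1}{2}\nabla_j f - \tfrac{1}{2}\nabla_j|\nabla f|^{2}$, the identity collapses to $\nabla_j\bigl(R + |\nabla f|^{2} - f\bigr) = 0$. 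Hence $R + |\nabla f|^{2} - f$ is a constant on $M$, and subtracting this constant from $f$ (which leaves the soliton equation invariant) produces the normalization (2).

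\textbf{Part (3).} For the upper bound I would use the Chen-type nonnegativity $R \geq 0$ on complete shrinking solitons, which combined with (2) gives $|\nabla f|^{2} \leq f$ wherever $f \geq 0$; hence $\bigl|\nabla\sqrt{f}\bigr| \leq \tfrac{1}{2}$, so integrating along a minimizing geodesic from $p$ yields $\sqrt{f(x)} \leq \sqrt{f(p)} + \tfrac{1}{2}d(x)$, which rearranges into the upper bound with a suitable $c_{2}$. The lower bound is the harder one: along a minimizing unit-speed geodesic $\gamma : [0,L] \to M$ from $p$ to a far point $x$, one evaluates $\tfrac{d}{ds}(f\circ\gamma) = \langle\nabla f,\dot\gamma\rangle$ and uses the soliton equation in the form $\mathrm{Ric}(\dot\gamma,\dot\gamma) + \nabla^{2}f(\dot\gamma,\dot\gamma) = \tfrac{1}{2}$; integrating against a cutoff along $\gamma$ and invoking the second variation / index form for the minimizing geodesic produces the estimate that $\tfrac{d}{ds}(f\circ\gamma) \geq \tfrac{s}{2} - C$ outside a fixed compact set, and a further integration gives $f(\gamma(s)) \geq \tfrac{1}{4}(s - c_{1})^{2}$. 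Taking $s = d(x)$ yields the claimed lower bound.

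The only nontrivial step is the lower bound in (3), where one must control $\mathrm{Ric}(\dot\gamma,\dot\gamma)$ along a minimizing geodesic via the second variation; the upper bound and parts (1)--(2) are essentially formal consequences of the soliton identity and contracted Bianchi.
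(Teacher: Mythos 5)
Your proposal is correct and follows exactly the standard arguments: parts (1)--(2) are the trace and contracted-Bianchi identities, and part (3) is the Cao--Zhou estimate (upper bound from $R\geq 0$ and $|\nabla\sqrt{f}|\leq\tfrac12$, lower bound from the second variation of a minimizing geodesic with a cutoff). The paper does not prove this lemma itself but defers to the references (Cao--Zhou, Fang--Man--Zhang, Haslhofer--M\"uller), and your sketch is precisely the argument given there, so there is nothing substantive to compare.
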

(3) in the above lemma is proved by Cao-Zhou \cite{Cao-Zhou}(see also Fang-Man-Zhang \cite{Fang-Man-Zhang} and for an improvement, Haslhofer-M\"{u}ller \cite{Haslhofer-Muller}). It is well known that a complete shrinking gradient Ricci solitons has nonnegative scalar curvature (see Chen \cite{Chen}) and either $R>0$ or the metric $g$ is flat (see Pigola-Rimoldi-Setti \cite{Pigola-Rimoldi-Setti2} or the author \cite{Zhang}). Recently, Chow-Lu-Yang \cite{Chow-Lu-Yang} proved that the scalar curvature of a complete noncompact nonflat shrinker has a lower bound by $Cd^{-2}(x)$ for some positive constant $C$. From (3) in the above lemma, we know there exists a point $p\in M$ such that $\nabla f(p)=0$, we will fix this point in this note.

We denote $c_{p}=f(p)=R(p)\geq 0$, $\rho=2 \sqrt{f-c_{p}}$, and $D(r)=\{\rho\leq r \}$.
Define $V(r)=\int_{D(r)}dv_{g}$, and $\chi(r)=\int_{D(r)}R dv_{g}$, then
$V'(r)=\int_{\partial D(r)}\frac{1}{|\nabla \rho|}=\frac{r}{2}\int_{\partial D(r)}\frac{1}{|\nabla f|}$
and $\chi'(r)=\int_{\partial D(r)}\frac{R}{|\nabla \rho|}=\frac{r}{2}\int_{\partial D(r)}\frac{R}{|\nabla f|}$.

\begin{lemma}\label{volume and scalar curvature integral identity}
\begin{equation}
0\leq n V(r)-2\chi(r)=\frac{r^{2}+4c_{p}}{r} V'(r)-\frac{4}{r}\chi'(r).
\end{equation}
\end{lemma}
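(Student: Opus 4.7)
My plan is to read this as a single application of the divergence theorem to $\int_{D(r)}\Delta f\, dV_g$. Combining items (1) and (2) of the lemma above, a shrinker satisfies $\Delta f = \tfrac{n}{2} - R$ and $|\nabla f|^{2} = f - R$, so
\begin{equation*}
  nV(r) - 2\chi(r) \;=\; \int_{D(r)}(n - 2R)\, dV_g \;=\; 2\int_{D(r)}\Delta f\, dV_g.
\end{equation*}
Once the right-hand side is rewritten as $2\int_{\partial D(r)}|\nabla f|\, dA$ via Stokes, the inequality $nV(r) - 2\chi(r) \geq 0$ is automatic, so the content of the lemma is really the identity.

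The quadratic growth estimate (3) gives $D(r)\subset B_p(r+c_1)$, so $D(r)$ is compact. By Sard's theorem applied to $f$ (equivalently to $\rho$), the boundary $\partial D(r)$ is a smooth hypersurface for almost every $r$, and the claimed identity then extends to all $r$ by continuity of both sides. On such a regular level set the outward unit normal to $D(r)$ is $\nu = \nabla\rho/|\nabla\rho|$, and since $\nabla\rho = 2\nabla f/\rho$ is parallel to $\nabla f$ wherever $\nabla f\neq 0$, one gets $\nu = \nabla f/|\nabla f|$ and hence $\langle\nabla f,\nu\rangle = |\nabla f|$. The divergence theorem therefore yields
\begin{equation*}
  nV(r) - 2\chi(r) \;=\; 2\int_{\partial D(r)}|\nabla f|\, dA.
\end{equation*}

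To convert this boundary integral into the right-hand side of the stated identity I use that on $\partial D(r)$ we have $\rho = r$, i.e.\ $f\equiv c_p + r^{2}/4$, so the soliton identity $|\nabla f|^{2} = f - R$ becomes
\begin{equation*}
  |\nabla f| \;=\; \frac{1}{|\nabla f|}\left(c_p + \frac{r^{2}}{4} - R\right)
\end{equation*}
pointwise on $\partial D(r)$. Integrating this over $\partial D(r)$ and substituting the expressions $V'(r) = \tfrac{r}{2}\int_{\partial D(r)}\frac{dA}{|\nabla f|}$ and $\chi'(r) = \tfrac{r}{2}\int_{\partial D(r)}\frac{R\, dA}{|\nabla f|}$ recorded in the text gives the identity after an elementary simplification (the coefficient $c_p + r^{2}/4$ combines with the factor $2/r$ from $V'$ to produce $(r^{2}+4c_p)/r$). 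The only mild obstacle is that $\nabla f$ may vanish at isolated points, and $\partial D(r)$ is only smooth for a.e.\ $r$; both issues are handled by Sard's theorem plus the continuity argument above.
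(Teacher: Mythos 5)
Your proposal is correct and follows essentially the same route as the paper: write $nV(r)-2\chi(r)=2\int_{D(r)}\Delta f$, apply the divergence theorem to get $2\int_{\partial D(r)}|\nabla f|$, rewrite the integrand as $(f-R)/|\nabla f|$ with $f=c_p+r^2/4$ on the level set, and substitute the co-area expressions for $V'$ and $\chi'$. The extra care you take with Sard's theorem and the regularity of $\partial D(r)$ is a welcome refinement of details the paper leaves implicit, but it does not change the argument.
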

\begin{proof}
Since
$$\Delta f+R=\frac{n}{2},$$
we have
\begin{eqnarray*}
\begin{aligned}
n V(r)-2\int_{D(r)}R dv_{g} &= 2\int_{D(r)}\Delta f dv_{g}\\
&= 2\int_{\partial D(r)}|\nabla f|\\
&=2\int_{\partial D(r)}\frac{f-R}{|\nabla f|}\\
&= \frac{r^{2}+4c_{p}}{r} V'(r)-2\int_{\partial D(r)}\frac{R}{|\nabla f|}\\
&= \frac{r^{2}+4c_{p}}{r} V'(r)-\frac{4}{r}\chi'(r).
\end{aligned}
\end{eqnarray*}
\end{proof}
Then we can estimate the volume growth from below under $R\leq \delta<\frac{n}{2}$ for some positive $\delta$.
\begin{lemma}\label{Lemma1.3}
Let $(M,g,f)$ be a complete shrinking gradient Ricci soliton with scalar curvature $R\leq \delta<\frac{n}{2}$ for some positive $\delta$. Then for any $r_{0}> 0$ and for any $r>r_{0}$, we have
\begin{equation}\label{VolumeLowerBound}
V(r)\geq \frac{V(r_{0})}{(r^{2}_{0}+4c_{p})^{\frac{n}{2}-\delta}}(r^{2}+4c_{p})^{\frac{n}{2}-\delta},
\end{equation}
and
\begin{equation}\label{AreaLowerBound}
V'(r)\geq (n-2\delta)\frac{V(r_{0})}{(r^{2}_{0}+4c_{p})^{n/2-\delta}}r(r^{2}+4c_{p})^{\frac{n}{2}-1-\delta}.
\end{equation}
\end{lemma}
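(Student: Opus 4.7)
The plan is to feed the identity from Lemma~\ref{volume and scalar curvature identity} into a Grönwall-type argument. From Lemma~1.2 we have
\begin{equation*}
\frac{r^{2}+4c_{p}}{r}V'(r) = nV(r)-2\chi(r)+\frac{4}{r}\chi'(r).
\end{equation*}
The assumption $R\le\delta$ gives $\chi(r)\le \delta V(r)$, so $nV(r)-2\chi(r)\ge (n-2\delta)V(r)$. Since the scalar curvature of a complete shrinking gradient Ricci soliton is nonnegative, $\chi'(r)\ge 0$, so the last term is harmless and can be discarded. Combining these observations yields the clean first-order differential inequality
\begin{equation*}
V'(r)\ge (n-2\delta)\,\frac{r}{r^{2}+4c_{p}}\,V(r).
\end{equation*}

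Next, I would rewrite this as $(\log V(r))'\ge \frac{n-2\delta}{2}(\log(r^{2}+4c_{p}))'$ and integrate from $r_{0}$ to $r$. This gives
\begin{equation*}
\log\frac{V(r)}{V(r_{0})}\ge \Bigl(\frac{n}{2}-\delta\Bigr)\log\frac{r^{2}+4c_{p}}{r_{0}^{2}+4c_{p}},
\end{equation*}
which exponentiates to (\ref{VolumeLowerBound}). Note that here it is essential that $\delta<\frac{n}{2}$ so that the exponent is positive and the resulting lower bound is meaningful as $r\to\infty$.

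Finally, to obtain (\ref{AreaLowerBound}) I would substitute the lower bound (\ref{VolumeLowerBound}) back into the differential inequality $V'(r)\ge (n-2\delta)\frac{r}{r^{2}+4c_{p}}V(r)$. The factor $\frac{r}{r^{2}+4c_{p}}\cdot(r^{2}+4c_{p})^{\frac{n}{2}-\delta}$ simplifies to $r(r^{2}+4c_{p})^{\frac{n}{2}-1-\delta}$, yielding (\ref{AreaLowerBound}) directly.

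There is no serious obstacle here once Lemma~1.2 is in hand; the only subtlety is recognising that the $\chi'(r)$ term has a helpful sign (so we can throw it away cleanly) and that the hypothesis $\delta<\frac{n}{2}$ is precisely what keeps the exponent $\frac{n}{2}-\delta$ positive, making the integration produce a genuine polynomial-type lower bound rather than a trivial one.
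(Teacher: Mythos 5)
Your proposal is correct and follows essentially the same route as the paper: the paper also combines the identity of Lemma \ref{volume and scalar curvature integral identity} with $\chi(r)\le\delta V(r)$ and $\chi'(r)\ge 0$, phrased as the monotonicity of $r\mapsto V(r)/(r^{2}+4c_{p})^{\frac{n}{2}-\delta}$, which is exactly your differential inequality $(\log V)'\ge(\tfrac{n}{2}-\delta)(\log(r^{2}+4c_{p}))'$ in integrated form. The derivation of (\ref{AreaLowerBound}) by substituting (\ref{VolumeLowerBound}) back into the differential inequality likewise matches the paper.
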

\begin{proof}
\begin{eqnarray*}
\begin{aligned}
\frac{d}{dr}(\log \frac{V(r)}{(r^{2}+4c_{p})^{\frac{n}{2}-\delta}})
&=\frac{V'(r)}{V(r)}-(\frac{n}{2}-\delta)\frac{2r}{r^{2}+4c_{p}}\\
&=\frac{(r^{2}+4c_{p})V'(r)-(n-2\delta)rV(r)}{(r^{2}+4c_{p})V(r)}\\
&=\frac{2r(\delta V(r)-\chi(r))+4\chi'(r)}{(r^{2}+4c_{p})V(r)}
\end{aligned}
\end{eqnarray*}
where we have used Lemma \ref{volume and scalar curvature integral identity} in the last equality. Since $R\leq \delta$ and $\chi(r)$ is nondecreasing with $r$, we obtain that
\begin{equation}
\frac{d}{dr}(\log \frac{V(r)}{(r^{2}+4c_{p})^{\frac{n}{2}-\delta}})\geq 0.
\end{equation}
Then we get (\ref{VolumeLowerBound}). Since $\chi(r)\leq \delta V(r)$ and $\chi'(r)\geq 0$, then (\ref{AreaLowerBound}) follows from Lemma \ref{volume and scalar curvature integral identity} and (\ref{VolumeLowerBound}).
\end{proof}
\begin{remark}
Chen Chih-Wei \cite{ChenCW} also obtain the lower bound of the geodesic ball with $\frac{1}{{\rm Vol}(B_r)}\int_{B_r}Rdv_g\leq \delta<\frac{n}{2}.$ Cao-Zhou \cite{Cao-Zhou} proved that the upper bound ${\rm Vol}(B_r)\leq Cr^{n}$ for any complete shrinking gradient Ricci solitons, and Chow-Lu-Yang \cite{Chow-Lu-Yang1} concluded a criterion for a shrinking gradient Ricci soliton to have positive asymptotic volume ratio.
\end{remark}

\section{Lower bound of the injectivity radius}
In this section, we prove the lower bound of the injectivity radius. We recall the logarithmic Sobolev inequality for shrinking Ricci solitons established by Carrillo-Ni \cite{Carrillo-Ni}.
\begin{equation}\label{LogSobolevInequality}
\int_{M}u^{2}\log u^{2}dv-(\int_{M}u^{2}dv)\log(\int_{M}u^{2}dv)\leq \mu_0\int_{M}u^{2}dv+\int_{M}Ru^{2}dv+4\int_{M}|\nabla u|^{2}dv.
\end{equation}
for any $u\in C_{0}^{\infty}(M)$, where $\mu_0$ is a Perelman's invariant. 

Using this logarithmic Sobolev inequality, following Perelman's proof of his no local collapsing theorem (see \cite{Perelman, Kleiner-Lott, Topping}),  we can obtain the following no local collapsing theorem for shrinking Ricci solitons, see \cite{Carrillo-Ni}.
\begin{theorem}
Let $(M,g,f)$ be a complete shrinking gradient Ricci soliton, then there exists a constant $\kappa=\kappa(\mu_0,n)$ depends only on $\mu_0$ and $n$ satisfying the following property. If $x_0\in M$ is a point and $r_0>0$ are such that $R\leq r_{0}^{-2}$ in $B_{x_0}(r_0)$, then 
\begin{equation}
{\rm Vol}B_{x_0}(r_0)\geq \kappa r_0^{n}.
\end{equation}
\end{theorem}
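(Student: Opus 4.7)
The plan is to mimic Perelman's classical no-local-collapsing proof \cite{Perelman, Kleiner-Lott, Topping}, with the Carrillo-Ni log Sobolev inequality (\ref{LogSobolevInequality}) taking the place of the Ricci-flow log Sobolev inequality. The idea is to test (\ref{LogSobolevInequality}) against a bump function concentrated in $B_{x_0}(r_0)$ and exploit the hypothesis $R\leq r_0^{-2}$ to force a lower bound on $V:={\rm Vol}(B_{x_0}(r_0))$.

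Concretely, I would fix a smooth cut-off $\phi:[0,\infty)\to [0,1]$ with $\phi\equiv 1$ on $[0,1/2]$, $\phi\equiv 0$ on $[1,\infty)$, and $|\phi'|\leq 4$, and set $\psi(x):=\phi(d(x,x_0)/r_0)$. Then $\psi$ is supported in $B_{x_0}(r_0)$, equals $1$ on $B_{x_0}(r_0/2)$, and satisfies $|\nabla\psi|\leq 4/r_0$. Plugging $u:=\psi/\sqrt{J}$ (with $J:=\int\psi^2\,dv$, so $\int u^2\,dv=1$) into (\ref{LogSobolevInequality}), one estimates each term: $\int Ru^2\,dv\leq r_0^{-2}$ from the hypothesis; $\int|\nabla u|^2\,dv\leq 16V/(r_0^2J)$ from the gradient bound on $\psi$; and $\int u^2\log u^2\,dv\geq -\log J-V/(eJ)$ using the elementary estimate $t\log t\geq -1/e$ and $\psi\leq 1$. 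Substituting into (\ref{LogSobolevInequality}) and rearranging yields a lower bound on $J$ (hence on $V\geq J\geq {\rm Vol}(B_{x_0}(r_0/2))$) in terms of $\mu_0$, $n$ and $r_0$.

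The main obstacle is producing the correct $r_0^n$-scaling of the volume lower bound. A naive application of (\ref{LogSobolevInequality}) at its native scale (Perelman's $\tau=1$) gives only $V\geq e^{-\mu_0-C/r_0^2}$, which has the wrong scaling in $r_0$. The standard Perelman remedy is to apply the log Sobolev inequality at the scale $\tau=r_0^2$ adapted to the ball; for a shrinking soliton this is permissible because the soliton generates a self-similar Ricci flow along which Perelman's $\mu$-functional is monotone and agrees with $\mu_0$ at the reference time, so that $\mu(g,r_0^2)$ is controlled by $\mu_0$. With the scale-appropriate inequality in hand, the estimate above upgrades to $V\geq \kappa(\mu_0,n)\,r_0^n$ in the standard manner, completing the argument.
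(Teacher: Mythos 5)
Your overall strategy --- Perelman's no-local-collapsing argument run with the Carrillo--Ni inequality (\ref{LogSobolevInequality}) in place of the Ricci-flow entropy --- is exactly the route the paper has in mind: the paper does not prove this theorem itself but defers to \cite{Carrillo-Ni}, and proves from scratch only the special case at scale $r_0=1$ under a two-sided sectional curvature bound. Measured against that intended argument, your sketch has two genuine gaps. First, the inequality you arrive at has the form $\log J\ge -\mu_0-r_0^{-2}-C(1+r_0^{-2})V/J$, with $J=\int\psi^2\,dv\ge{\rm Vol}(B_{x_0}(r_0/2))$ and $V={\rm Vol}(B_{x_0}(r_0))$; this ``rearranges to a lower bound on $J$'' only if the ratio $V/J$ is bounded, and with no curvature hypothesis there is no a priori bound on ${\rm Vol}(B_{x_0}(r_0))/{\rm Vol}(B_{x_0}(r_0/2))$. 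The paper's special-case lemma obtains this ratio from Bishop--Gromov using $-C_1\le K(g)\le A$; in the general statement Perelman's proof instead requires the dyadic-scale selection argument (if the half-ball has much smaller volume, pass to $r_0/2$, observe that the ratio ${\rm Vol}(B_{x_0}(r))/r^n$ strictly decreases, and use that it tends to $\omega_n$ as $r\to 0$ to find a good scale). That step is absent from your proposal and cannot be skipped.

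Second, and more seriously, the entire burden of the $r_0^n$ scaling is placed on the assertion that ``$\mu(g,r_0^2)$ is controlled by $\mu_0$,'' and the justification offered does not close. Along the self-similar flow $g(t)=(1-t)\phi_t^*g$ one indeed has $\mu(g(t),1-t)=\mu_0$ for all $t<1$, and monotonicity (granting its validity on a complete noncompact shrinker with possibly unbounded curvature, which itself needs justification) gives $\mu(g,r_0^2)\ge\mu(g(t),r_0^2-t)=\mu\bigl(g,\tfrac{r_0^2-t}{1-t}\bigr)$ for $t<0$. The scales $\tfrac{r_0^2-t}{1-t}$ sweep out the interval between $r_0^2$ and $1$ but never reach $1$, so to conclude $\mu(g,r_0^2)\ge\mu_0-\varepsilon$ you would need $\limsup_{\sigma\to 1}\mu(g,\sigma)\ge\mu(g,1)$. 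Since $\mu(g,\cdot)$ is an infimum of functions continuous in $\tau$, it is only upper semicontinuous, which gives the opposite inequality; the lower semicontinuity you need at $\tau=1$ is not automatic. Establishing a scale-adapted logarithmic Sobolev inequality with constant depending only on $\mu_0$ and $n$ is precisely the content for which the paper cites \cite{Carrillo-Ni}, so as written your proposal reduces the theorem to an unproved claim that carries essentially all of its difficulty.
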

Here we just need a special case of no local collapsing theorem under the assumptions in the Theorem \ref{MainThm}, we state it as the following lemma.
\begin{lemma}
Let $(M,g,f)$ be a complete shrinking gradient Ricci soliton with ${\rm Vol}_{f}(M)\geq v>0$, $K(g)\leq A$ and $R\leq 1$, then there exists a uniform constant $C(A,n)>0$ depends only on $A$ and $n$ such that for any point $p\in M$
\begin{equation}
{\rm Vol}(B_{p}(1))\geq C(A,n)v.
\end{equation}
\end{lemma}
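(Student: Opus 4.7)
The plan is to insert a standard cutoff into the Carrillo--Ni logarithmic Sobolev inequality \eqref{LogSobolevInequality}, bound the resulting entropy term from below by Jensen's inequality, and finally control the volume ratio that appears using classical Bishop--Gromov comparison, fed by the sectional curvature upper bound.

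First, I would fix a smooth cutoff $\phi$ with $0\le\phi\le 1$, $\phi\equiv 1$ on $B_p(1/2)$, $\operatorname{supp}\phi\subset B_p(1)$, and $|\nabla\phi|\le C_0$ pointwise for a universal constant $C_0$. Write $V:=\int_M\phi^2\,dv_g$ and $V_r:=\mathrm{Vol}(B_p(r))$. Substituting $u=\phi$ into \eqref{LogSobolevInequality} and applying $R\le 1$, $\mu_0=-\log\mathrm{Vol}_f(M)\le-\log v$, and $|\nabla\phi|^2\le C_0^2$ gives
\[
\int\phi^2\log\phi^2\,dv_g - V\log V \;\le\; -V\log v + V + 4C_0^2\,V_1.
\]
By convexity of $t\mapsto t\log t$, Jensen's inequality with respect to the normalized Lebesgue measure on $B_p(1)$ bounds the left-hand side below by $-V\log V_1$. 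Since $\phi\equiv 1$ on $B_p(1/2)$ forces $V\ge V_{1/2}$, combining these produces
\[
\log\frac{v}{V_1} \;\le\; 1 + 4C_0^2\cdot\frac{V_1}{V_{1/2}}.
\]

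The main step is then to bound $V_1/V_{1/2}$ by a constant depending only on $n$ and $A$. Because $K\le A$, every eigenvalue $\lambda_i$ of the Ricci tensor satisfies $\lambda_i\le(n-1)A$. Chen's theorem \cite{Chen} gives $R\ge 0$ on any complete shrinking gradient Ricci soliton, so $\lambda_i=R-\sum_{j\neq i}\lambda_j\ge -(n-1)^2 A$; hence $\mathrm{Ric}\ge -(n-1)^2 A\,g$ on all of $M$. Classical Bishop--Gromov volume comparison against the simply connected model space of constant sectional curvature $-(n-1)A$ then yields an explicit constant $D(n,A)>0$ with $V_1/V_{1/2}\le D(n,A)$, and substituting back gives $V_1\ge v\cdot e^{-1-4C_0^2 D(n,A)}$, which is the claimed bound $C(A,n)v$.

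The genuine obstacle is that $K\le A$ only controls Ricci from above; passing to the two-sided Ricci bound that Bishop--Gromov requires is precisely where the non-negativity of scalar curvature on shrinkers is indispensable, as without it the Ricci eigenvalues could a priori be arbitrarily negative even when every sectional curvature is bounded above by $A$.
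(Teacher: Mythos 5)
Your argument is correct and follows essentially the same strategy as the paper: insert a cutoff supported on $B_p(1)$ into the Carrillo--Ni logarithmic Sobolev inequality, reduce to controlling the ratio ${\rm Vol}(B_p(1))/{\rm Vol}(B_p(1/2))$, and bound that ratio by Bishop--Gromov after deducing a curvature lower bound from $K(g)\leq A$ together with $R\geq 0$. The only differences are cosmetic: the paper normalizes the test function and bounds the entropy term via the pointwise inequality $x\log x\geq -\frac{1}{e}$, whereas you keep the unnormalized cutoff and use Jensen's inequality (and you derive a Ricci rather than sectional lower bound, which is all Bishop--Gromov needs); both routes give the same conclusion.
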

\begin{proof}
For short we denote $B(r)$ for $B_{p}(r)$ and $V(r)$ for ${\rm Vol}(B_{p}(r))$ in the proof of this lemma.

Let $\phi:[0,\infty)\rightarrow [0,1]$ be a cutoff function with $\phi=1$ as $x\in[0,1/2]$, $\phi=0$ as $x\in [1,\infty)$ and $|\phi'|\leq 3$. Set 
\begin{equation*}
\omega^{2}_{1}(x)=(4\pi)^{-\frac{n}{2}}\phi^{2}(r(x))e^{-C}
\end{equation*}
where $C$ is chosen so that $\int_{M}\omega^{2}_1dv=1.$ We have the estimate of $C$,
\begin{equation}
-\frac{n}{2}\log 4\pi +\log V(\frac{1}{2})\leq C\leq -\frac{n}{2}\log 4\pi +\log V(1).
\end{equation}
Now we estimate
\begin{eqnarray}
\begin{aligned}
4\int_{M}|\nabla w_1|^{2}dv&=4(4\pi)^{-n/2}e^{-C}\int_{M}|\phi'|^{2}dv\\
&\leq 36 (4\pi)^{-n/2}e^{-C}V(1)\\
&\leq 36\frac{V(1)}{V(\frac{1}{2})}.
\end{aligned}
\end{eqnarray}
Since $w^{2}_1=(4\pi)^{-\frac{n}{2}}e^{-C}\phi^{2}$,
$$\int_{M}w^{2}_1\log w^{2}_1 dv=-C-\frac{n}{2}\log(4\pi)+(4\pi)^{-\frac{n}{2}}e^{-C}\int_{M}\phi^{2}\log\phi^{2}dv.$$
Since $x\log x\geq -\frac{1}{e}$ for any $x>0$ and $\phi^{2}\log \phi^{2}=0$ outside the ball $B(1)$, 
$$\int_{M}\phi^{2}\log\phi^{2}dv\geq -\frac{1}{e}V(1).$$
Then using the estimate of $C$, we get
\begin{equation}
\int_{M}w^{2}_1\log w^{2}_1 dv\geq -\log V(1)-\frac{1}{e}\frac{V(1)}{V(1/2)}.
\end{equation}
Since $R\leq 1$, we take $u=w_1$ in the logarithmic Sobolev inequality (\ref{LogSobolevInequality}), and combine the above estimates, we get
$$-\log V(1)-\frac{1}{e}\frac{V(1)}{V(1/2)}\leq \mu_0+1+36\frac{V(1)}{V(1/2)}.$$
Hence we have
$$\log V(1)\geq-\mu_0-1-37\frac{V(1)}{V(1/2)}.$$
Since $K(g)\leq A$ and $R\geq 0$, there exists a uniform constant $C_1=C_{1}(A,n)>0$ depends only on $A$ and $n$ such that $K(g)\geq -C_1$. Then using the Bishop-Gromov's volume comparison theorem, there exists a uniform constant $C_2=C_{2}(A,n)>0$ depends only on $A$ and $n$ such that 
$$\frac{V(1)}{V(1/2)}\leq C_2.$$ 
Hence there exists a uniform constant $C(A,n)>0$ depends only on $A$ and $n$, such that
\begin{equation}
V(1)\geq C(A,n){\rm Vol}_{f}(M)\geq C(A,n)v.
\end{equation}
\end{proof}

Cheeger-Gromov-Taylor obtained a lower bound estimate of the injectivity radius as following, see Theorem 4.7 (i) in \cite{Cheeger-Gromov-Taylor}.
\begin{theorem}[Cheeger-Gromov-Taylor]\label{Cheeger-Gromov-Taylor}
Let $(M^{n},g)$ be a complete manifold with $H\leq K(g)\leq K, K>0.$ Let $r=r(x)$ be the distance function from $x$ to $p$, and fix $r_{1}, r_0, s,$ with $s\leq r_1, r_{0}+2s<\frac{\pi}{\sqrt{K}}, r_{0}\leq \frac{\pi}{4\sqrt{K}}.$ Then
\begin{equation}
i(x)\geq \frac{r_0}{2}\frac{1}{1+(V_{H}(r_0+s)/{\rm Vol}(B_p(r_1)))(V_{H}(r+r_1)/V_{H}(s))}.
\end{equation}
\end{theorem}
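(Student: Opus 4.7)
The plan is to argue by contrapositive: assume $i(x)$ is strictly less than the claimed lower bound, and derive a contradiction via volume packing in the tangent space at $x$. The numerical hypotheses guarantee $2 i(x) < r_0 \leq \pi/(4\sqrt{K})$, placing us below the conjugate radius (which is at least $\pi/\sqrt{K}$ by the upper curvature bound $K(g) \leq K$). Hence Klingenberg's argument produces a closed geodesic loop at $x$ of length exactly $2 i(x)$; equivalently, a unit vector $w \in T_x M$ with $\exp_x(2 i(x) w) = x$.

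The core of the proof is to pull the geometry back to $T_x M$ via $\exp_x$. On the ball $B(0, \pi/\sqrt{K}) \subset T_x M$ there are no conjugate points, so the pull-back metric $\tilde g = \exp_x^* g$ is smooth, satisfies the same curvature bounds $H \leq K(\tilde g) \leq K$, and makes $\exp_x$ a local isometry from the lifted ball onto its image. The short loop at $x$ manifests in the lift as a partially defined isometry $\sigma$ of this ball which translates the origin by distance $2 i(x)$ and satisfies $\exp_x \circ \sigma = \exp_x$ wherever defined. Fixing a preimage $\tilde p$ of $p$ with $|\tilde p| = r$, the orbit $\{\sigma^k(\tilde p)\}_{k\geq 0}$ produces distinct preimages of $p$, and by the local isometry property each carries an isometric copy $\tilde B(\sigma^k(\tilde p), r_1)$ of $B_p(r_1)$ in the lift.

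The estimate then follows from a packing count. Combining a Bishop--Gromov-type upper bound in the lifted ball (which retains the lower sectional curvature bound $H$) on how many disjoint $r_1$-copies of $B_p(r_1)$ can be packed around the $\sigma$-orbit inside $B(0, r+r_1)$, with a parallel comparison in the space form of curvature $H$ between a ball of radius $s$ and one of radius $r_0 + s$ (giving the ratio $V_H(r_0+s)/V_H(s)$ as a covering multiplicity), one arrives at an inequality of the shape
\begin{equation*}
\frac{r_0}{2 i(x)} \;\leq\; 1 + \frac{V_H(r_0 + s)}{{\rm Vol}(B_p(r_1))} \cdot \frac{V_H(r + r_1)}{V_H(s)}.
\end{equation*}
Solving for $i(x)$ yields exactly the claimed bound, contradicting the starting assumption. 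The main technical obstacle is setting up the partial isometry $\sigma$ rigorously and verifying that, for the range of iterates used in the count, $\sigma^k(\tilde p)$ stays in the domain where $\exp_x$ remains a local isometry and where the carried $r_1$-balls pack with the required multiplicity; the numerical hypotheses $s \leq r_1$, $r_0 + 2s < \pi/\sqrt{K}$, and $r_0 \leq \pi/(4\sqrt{K})$ are precisely calibrated so that this bookkeeping goes through.
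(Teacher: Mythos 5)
First, a point of reference: the paper does not prove this statement at all --- it is quoted verbatim from Cheeger--Gromov--Taylor (Theorem 4.7(i) of their paper) and used as a black box, so there is no in-paper proof to compare against. Judged on its own, your sketch assembles the right ingredients: Klingenberg's short geodesic loop, the pull-back of $g$ by $\exp_x$ to the conjugate-point-free ball $B(0,\pi/\sqrt{K})\subset T_xM$, a Bishop-type volume upper bound coming from the lower curvature bound $H$, and a relative volume comparison to bring $\mathrm{Vol}(B_p(r_1))$ into the estimate. But the specific packing scheme you describe does not go through.

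The difficulty is that you run the count around a lift $\tilde p$ of $p$ with $|\tilde p|=r$ inside $B(0,r+r_1)\subset T_xM$. The theorem places no upper bound on $r=d(x,p)$, and once $r\ge \pi/\sqrt{K}$ the point $\tilde p$ lies outside the region where $\exp_x$ is an immersion, so the pulled-back metric, the partial isometry $\sigma$, and the ``isometric copies'' of $B_p(r_1)$ are simply unavailable there; you flag this as bookkeeping that the hypotheses are ``calibrated'' to handle, but the hypotheses $s\le r_1$, $r_0+2s<\pi/\sqrt{K}$, $r_0\le\pi/(4\sqrt{K})$ constrain only $r_0,r_1,s$, not $r$. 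Even for small $r$, the orbit points $\sigma^k(\tilde p)$ are at mutual distance about $2i(x)|j-k|$, which is far smaller than $2r_1$, so the lifted $r_1$-balls are not disjoint; and a local isometry only gives $\exp_x(\tilde B(\tilde p,r_1))\subseteq B_p(r_1)$, i.e.\ no lower volume bound for the lifted ball. The correct bookkeeping keeps everything near the origin: one counts the $N\gtrsim r_0/(2i(x))$ preimages in $\tilde B(0,r_0+s)$ of each point of $B_x(s)$ --- this is exactly what $r_0+2s<\pi/\sqrt{K}$ protects --- to get $N\,\mathrm{Vol}(B_x(s))\le \mathrm{Vol}(\tilde B(0,r_0+s))\le V_H(r_0+s)$, and only afterwards converts $\mathrm{Vol}(B_x(s))\ge \mathrm{Vol}(B_p(r_1))\,V_H(s)/V_H(r+r_1)$ via the inclusion $B_p(r_1)\subset B_x(r+r_1)$ and Bishop--Gromov centered at $x$. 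That is where the ratio $V_H(r+r_1)/V_H(s)$ actually enters; it is not a covering multiplicity in the lifted picture. So the roles you assign to the two volume ratios are interchanged, and the step on which your packing of copies of $B_p(r_1)$ in the tangent space rests is the one that fails.
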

Let $i(x)$ denote the injectivity radius at point $x$, let $i(M)$ denote the injectivity of $M$, i.e., $i(M)=\inf_{x\in M} i(x).$Using Theorem \ref{Cheeger-Gromov-Taylor}, we can obtain the uniform lower bound of the injectivity radius.
\begin{lemma}\label{LowerBoundInjectivity}
Let $(M,g,f)$ be a complete shrinking gradient Ricci soliton with the same assumptions in the Theorem \ref{MainThm}, then there exists a uniform constant $C=C(A,v,n)>0$ depends only on $A,v$ and $n$ such that
\begin{equation}
i(M)\geq C.
\end{equation}
\end{lemma}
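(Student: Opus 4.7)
The plan is to combine the uniform volume lower bound $\mathrm{Vol}(B_x(1)) \geq C(A,n)v$, established in the preceding lemma, with the injectivity radius estimate of Theorem \ref{Cheeger-Gromov-Taylor}. As already observed in that preceding proof, the hypothesis $K(g) < A$ together with $R \geq 0$ (valid on every complete shrinking gradient Ricci soliton) yields a two-sided sectional curvature bound $-C_1(A,n) \leq K(g) \leq A$, so the geometric hypotheses of Theorem \ref{Cheeger-Gromov-Taylor} are in place as soon as we agree to take $\epsilon_{n,A,v} \leq 1$, which lets us apply the previous lemma with $R \leq 1$.

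To promote this to a lower bound on $i(x)$ that is \emph{uniform} in $x \in M$, I would invoke Theorem \ref{Cheeger-Gromov-Taylor} at each point $x$ with the center chosen to be $p = x$. This choice is the one delicate point: with $p = x$ the distance $r = r(x)$ in the statement vanishes, so the factor $V_H(r + r_1)$ collapses to the universal constant $V_H(r_1)$; any other choice of basepoint would introduce a dependence on $d(x,p)$ that could not be controlled as $x$ varies over $M$. Set $r_1 = 1$ (so that the denominator volume is exactly $\mathrm{Vol}(B_x(1))$, for which we have a uniform lower bound) and $r_0 = s = \min\{1, \pi/(8\sqrt{A})\}$; the constraints $s \leq r_1$, $r_0 + 2s < \pi/\sqrt{A}$, and $r_0 \leq \pi/(4\sqrt{A})$ are then all immediate.

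With $H = -C_1$, the three quantities $V_H(r_0 + s)$, $V_H(r_1)$, $V_H(s)$ are explicit positive constants depending only on $A$ and $n$. Substituting the volume lower bound of the previous lemma and these constants into Theorem \ref{Cheeger-Gromov-Taylor} gives
$$i(x) \;\geq\; \frac{r_0}{2} \cdot \frac{1}{1 + \dfrac{V_H(r_0 + s)}{C(A,n)\,v} \cdot \dfrac{V_H(r_1)}{V_H(s)}},$$
a positive constant depending only on $A$, $v$, $n$. Taking the infimum over $x$ yields the claim. The only genuine obstacle in this plan is spotting that one should pick $p = x$ inside Theorem \ref{Cheeger-Gromov-Taylor} so as to kill the otherwise unbounded $V_H(r + r_1)$ term; everything after that is routine bookkeeping on the constants produced by Cheeger--Gromov--Taylor.
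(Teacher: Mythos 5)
Your proposal is correct and follows essentially the same route as the paper: feed the uniform bound $\mathrm{Vol}(B_x(1))\geq C(A,n)v$ from the preceding lemma into Theorem \ref{Cheeger-Gromov-Taylor} applied at $x=p$ (so that $r=0$ and the $V_H(r+r_1)$ factor becomes a constant), with all radii chosen depending only on $A$. The only cosmetic difference is your choice $r_1=1$, which lets you use $\mathrm{Vol}(B_x(1))$ directly, whereas the paper takes $r_1=r_0=s=\pi/(4\sqrt{A})$ and then needs an extra Bishop--Gromov step to compare $\mathrm{Vol}(B_p(r_0))$ with $\mathrm{Vol}(B_p(1))$; your version is marginally cleaner.
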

\begin{proof}
For any point $p\in M$. Under the assumptions in the Theorem \ref{MainThm}, we get a lower bound $H$ of $K(g)$ depends only on $A,n$,  we take $r_{1}=r_{0}=s=\frac{\pi}{4\sqrt{A}}$ and $x=p$ in the Theorem \ref{Cheeger-Gromov-Taylor}, we obtain 
$$i(p)\geq \frac{r_0}{2}\frac{1}{1+V_{H}(2r_0)/{\rm Vol}(B_p(r_0))}.$$
If $r_{0}\geq 1$, then ${\rm Vol}(B_p(r_0))\geq {\rm Vol}(B_p(1))\geq C(A,n)v$. If $r_{0}<1$, by Bishop-Gromov volume comparison theorem, 
$${\rm Vol}(B_p(r_0))\geq \frac{V_{H}(r_0)}{V_{H}(1)}{\rm Vol}(B_p(1))\geq \tilde{C}(A,n)v.$$
Hence there exists a uniform constant $C=C(A,n,v)>0$ depends only on $A,v$, and $n$, such that
$$i(p)\geq C.$$
Hence
$$i(M)\geq C.$$
\end{proof}

\section{Proof of the Theorem \ref{MainThm}}
In this section, we first use the G\"unther's volume comparison theorem to give a lower bound estimate of $V(r_{0})$ for small $r_{0}$ with small scalar curvature. Then by Lemma \ref{Lemma1.3}, we get the estimate ${\rm Vol}_{f}(M)$ is close to $1$, hence the Theorem \ref{MainThm} follows from the Theorem \ref{Yokota}.
\begin{proof}[Proof of Theorem \ref{MainThm}]
We assume the scalar curvature $R\leq \epsilon$ for some $0<\epsilon\leq 1$. By Lemma \ref{LowerBoundInjectivity} and G\"unther's volume comparison theorem, for any $r<C_{0}=\min \{C, \frac{\pi}{\sqrt{A}}\}$, here $C$ is the constant in the Lemma \ref{LowerBoundInjectivity}, we have
\begin{equation}
{\rm Vol}(B_p(r))\geq n\omega_{n}\int_{0}^{r}(\frac{\sin(A^{1/2}s)}{A^{1/2}})^{n-1}ds.
\end{equation}
From $R+|\nabla f|^{2}=f$ and $R\geq 0$, we have $|\nabla 2\sqrt{f}|\leq 1$, hence
$$\rho(x)\leq 2\sqrt{f(x)}\leq r(x)+2\sqrt{f(p)}\leq r(x)+2\sqrt{\epsilon}.$$
Thus  for any $r_{0}\geq 2\sqrt{\epsilon}$ we have
$$B_{p}(r_0-2\sqrt{\epsilon})\subseteq D(r_0).$$
If $r_0-2\sqrt{\epsilon}<C_0$, by G\"unther's volume comparison theorem
\begin{equation}
V(r_0)\geq {\rm Vol}(B_{p}(r_0-2\sqrt{\epsilon})\geq n\omega_{n}\int_{0}^{r_0-2\sqrt{\epsilon}}(\frac{\sin(A^{1/2}s)}{A^{1/2}})^{n-1}ds
\end{equation}
Now we choose $\epsilon=\epsilon_n(A,v)>0$ such that $\epsilon^{1/4}<C_0$ and take $r_{0}=\epsilon^{1/4}+2\epsilon^{1/2}$, then we have
\begin{equation}
V(\epsilon^{1/4}+2\epsilon^{1/2})\geq n\omega_{n}\int_{0}^{\epsilon^{1/4}}(\frac{\sin(A^{1/2}s)}{A^{1/2}})^{n-1}ds
\end{equation}
Hence
\begin{eqnarray}
\begin{aligned}
\frac{V(r_0)}{(r^{2}_0+4c_p)^{n/2-\epsilon}}&\geq\frac{V(\epsilon^{1/4}+2\epsilon^{1/2})}{((\epsilon^{1/4}+2\epsilon^{1/2})^{2}+4\epsilon)^{n/2-\epsilon}}\\
&\geq n\omega_{n}\frac{\int_{0}^{\epsilon^{1/4}}(\frac{\sin(A^{1/2}s)}{A^{1/2}})^{n-1}ds}{\epsilon^{n/4}(1+4\epsilon^{1/4}+8\epsilon^{1/2})^{n/2-\epsilon}\epsilon^{-\epsilon/2}}
\end{aligned}
\end{eqnarray}
We denote
$$\alpha(\epsilon):=(1+4\epsilon^{1/4}+8\epsilon^{1/2})^{-n/2+\epsilon}\epsilon^{\epsilon/2}$$
and
$$C(A, \epsilon):=n\omega_{n}\int_{0}^{\epsilon^{1/4}}(\frac{\sin(A^{1/2}s)}{A^{1/2}})^{n-1}ds/\epsilon^{n/4}.$$
Note that
\begin{equation}
\lim_{\epsilon\rightarrow 0}\alpha(\epsilon)=1,
\end{equation}
and
\begin{equation}
\lim_{\epsilon\rightarrow 0}C(A, \epsilon)=\omega_{n}.
\end{equation}
By Lemma \ref{Lemma1.3},
for any $r\geq \epsilon^{1/4}+2\epsilon^{1/2}$,  we have
\begin{equation}\label{1.21}
V'(r)\geq (n-2\epsilon)\alpha(\epsilon)C(A,\epsilon)r^{n-1-2n\epsilon}.
\end{equation}
Denote $V_{f}(r):=(4\pi)^{-n/2}\int_{D(r)}e^{-f}dv_{g}$. Then 
$$V_{f}(r)=e^{-c_{p}}(4\pi)^{-n/2}\int_{D(r)}e^{-\frac{\rho^{2}}{4}}dv_{g}$$ 
and 
\begin{equation}\label{1.22}
V'_{f}(r)=e^{-c_{p}-\frac{r^{2}}{4}}(4\pi)^{-n/2}\int_{\rho(x)=r}\frac{1}{|\nabla\rho|}=e^{-c_{p}-\frac{r^{2}}{4}}(4\pi)^{-n/2}V'(r).
\end{equation}
Hence for any $r>r_0=\epsilon^{1/4}+2\epsilon^{1/2}$, by (\ref{1.21}) and (\ref{1.22}), we have
\begin{eqnarray*}
\begin{aligned}
&V_{f}(r)=V_{f}(r_0)+\int_{r_0}^{r}V'_{f}(s)ds\\
&\geq e^{-c_{p}}(4\pi)^{-n/2}(n-2\epsilon)\alpha(\epsilon)C(A,\epsilon)\int_{r_0}^{r}s^{n-1-2n\epsilon}e^{-s^2/4}ds
\end{aligned}
\end{eqnarray*}
Hence
\begin{equation}
{\rm Vol}_{f}(M)\geq e^{-\epsilon}(4\pi)^{-n/2}(n-2\epsilon)\alpha(\epsilon)C(A,\epsilon)\int_{r_0}^{+\infty}s^{n-1-2n\epsilon}e^{-s^2/4}ds
\end{equation}
For any fixed $A>0$ and $v>0$, the right hand side of the above inequality is a continuous function about $\epsilon$, and equals $1$ as $\epsilon\rightarrow 0$. Hence there exists a uniform constant $\epsilon_{n, A,v}$ depending only on $n, A$ and $v$, such that if $R\leq \epsilon_{n, A,v}$, we have
\begin{equation}
{\rm Vol}_f(M)\geq 1-\epsilon'_n
\end{equation}
where $\epsilon'_n$ is the constant in Theorem \ref{Yokota}. Then Theorem \ref{MainThm} follows from Theorem \ref{Yokota}.
\end{proof}

\section*{Acknowledgements}
The author is partially supported by NSFC No. 11301017, Research Fund for the Doctoral Program of Higher Education of China, the Fundamental Research Funds for the Central Universities. The author also thank the referee to point out the gap about the lower bound of the injectivity radius in the previous version and helpful suggestions.

\bibliographystyle{amsplain}

\end{document}